\numberwithin{equation}{section} \setlength{\textwidth} {15cm}
\def\beq{\begin{equation}}
\def\eeq{\end{equation}}
\def\bR{ {{\mathbb{R}}}}
\def\Tr{ {{\rm{Tr}}} }
\newcommand{\sgn}{{\rm sgn}}
\newcommand{\pk}[1]{p_{\kappa}}
\newcommand{\ol}{\overline{l}}
\newcommand{\ul}{\underline{l}}
\newcommand{\oL}{\overline{L}}
\newcommand{\uL}{\underline{L}}
\newcommand{\on}{\overline{n}}
\newcommand{\un}{\underline{n}}
\newtheorem{defn}{{\bf Definition}}[section]
\newtheorem{thm}[defn]{{\bf Theorem}}
\newtheorem{cor}[defn]{{\bf Corollary}}
\newtheorem{lem}[defn]{{\bf Lemma}}
\newtheorem{rem}[defn]{{\bf Remark}}
\newtheorem{notation}[defn]{Notation}
\newenvironment{proof}[1][Proof]{\textbf{#1.} }{\hfill \rule{0.5em}{0.5em}}
\begin{document}

\title{Path Integral Quantization of Volume}
\author{Adrian P. C. Lim \\
Email: ppcube@gmail.com
}

\date{}

\maketitle

\begin{abstract}
A hyperlink is a finite set of non-intersecting simple closed curves in $\mathbb{R} \times \mathbb{R}^3$. Let $R$ be a compact set inside $\mathbf{R}^3$. The dynamical variables in General Relativity are the vierbein $e$ and a $\mathfrak{su}(2)\times\mathfrak{su}(2)$-valued connection $\omega$. Together with Minkowski metric, $e$ will define a metric $g$ on the manifold. Denote $V_R(e)$ as the volume of $R$, for a given choice of $e$.

The Einstein-Hilbert action $S(e,\omega)$ is defined on $e$ and $\omega$. We will quantize the volume of $R$ by integrating $V_R(e)$ against a holonomy operator of a hyperlink $L$, disjoint from $R$, and the exponential of the Einstein-Hilbert action, over the space of vierbein $e$ and $\mathfrak{su}(2)\times\mathfrak{su}(2)$-valued connection $\omega$. Using our earlier work done on Chern-Simons path integrals in $\mathbb{R}^3$, we will write this infinite dimensional path integral as the limit of a sequence of Chern-Simons integrals. Our main result shows that the volume operator can be computed by counting the number of half-twists in the projected hyperlink, which lie inside $R$. By assigning an irreducible representation of $\mathfrak{su}(2)\times\mathfrak{su}(2)$ to each component of $L$, the volume operator gives the total kinetic energy, which comes from translational and angular momentum.
\end{abstract}

\hspace{.35cm}{\small {\bf MSC} 2010: } 83C45, 81S40, 81T45, 57R56 \\
\indent \hspace{.35cm}{\small {\bf Keywords}: Volume, Path integral, Einstein-Hilbert, Loop representation, Quantum gravity}

%{\bf \today}

%%%%%%%%%%%%%%%%%%%%%%%%%%%%%%%%%%%%%%%
%%%%%%%%%%%%%%%%%%%%%%%%%%%%%%%%%
%%%%%%%%%%%%%%%%%%%%%%%%%%%%%%%%%

%\tableofcontents

\section{Quantization of volume}

Spin networks are used by the authors in \cite{Baez:1999sr}, \cite{Baez:1999:Online}, \cite{Baez1996253} and \cite{Rovelli:1995ac} to develop quantum gravity. A spin network in $\bR^3$ is essentially a graph, each vertex has valency 3 and a spin is assigned to each edge, satisfying a certain inequality for edges incident on a common vertex. For $\bR^4$, one has to use spin foam, which we refer the reader to \cite{Baez:1999sr} and \cite{Baez:1999:Online}.

In quantum gravity, the underlying metric is a dynamical variable. In the quantization of gravity, one of the classical property that one wants to quantize is volume of a three dimensional region $R$, which can be defined as a functional of this metric. To quantize the volume, one would attempt to apply canonical quantization and promote the metric to be an operator, which is no easy task.

One way to overcome this problem would be to use Ashtekar variables. In \cite{rovelli1995discreteness}, the authors wrote the volume of a three dimensional region $R$ in terms of these Ashtekar variables and used canonical quantization on Ashtekar variables to define a volume operator. The result they obtained was to count the vertices in a graph (spin network) which lie in a region $R$, weighted by some term related to the spin, hence the eigenvalues of the operator can be explicitly calculated. Their derivation however, is not mathematically rigorous.

We will instead quantize the volume of $R$, using a Einstein-Hilbert path integral approach, which we will refer the reader to \cite{EH-Lim02} for details. The idea actually comes from \cite{PhysRevLett.61.1155}, which used loops in $\bR^4$ to describe quantum gravity. More importantly, the authors wrote down a path or functional integral using a suitable (infinite dimensional) measure. To understand our main result, we will first need to describe loops in $\bR^4$.

Consider $\bR^4 \equiv \bR \times \bR^3$, whereby $\bR$ will be referred to as the time-axis and $\bR^3$ is the spatial 3-dimensional Euclidean space. In future, when we write $\bR^3$, we refer to the spatial subspace in $\bR^4$. Let $\pi_0: \bR^4 \rightarrow \bR^3$ denote this projection. Fix the standard coordinates on $\bR^4\equiv \bR \times \bR^3 $, with time coordinate $x_0$ and spatial coordinates $(x_1, x_2, x_3)$.

Let $\{e_i\}_{i=1}^3$ be the standard basis in $\bR^3$. And $\Sigma_i$ is the plane in $\bR^3$, containing the origin, whose normal is given by $e_i$. So, $\Sigma_1$ is the $x_2-x_3$ plane, $\Sigma_2$ is the $x_3-x_1$ plane and finally $\Sigma_3$ is the $x_1-x_2$ plane. Let $\pi_i: \bR^4 \rightarrow \bR \times \Sigma_i$ denote this projection.

For a finite set of non-intersecting simple closed curves in $\bR^3$ or in $\bR \times \Sigma_i$, we will refer to it as a link. If it has only one component, then this link will be referred to as a knot. A simple closed curve in $\bR^4$ will be referred to as a loop. A finite set of non-intersecting loops in $\bR^4$ will be referred to as a hyperlink in this article. We say a link or hyperlink is oriented if we assign an orientation to its components.

Let $L$ be a hyperlink. We say $L$ is a time-like hyperlink, if given any 2 distinct points $p\equiv (x_0, x_1, x_2, x_3), q\equiv (y_0, y_1, y_2, y_3) \in L$, $p \neq q$, we have
\begin{itemize}
  \item $\sum_{i=1}^3(x_i - y_i)^2 > 0$;
  \item if there exists $i, j$, $i \neq j$ such that $x_i = y_i$ and $x_j = y_j$, then $x_0 - y_0 \neq 0$.
\end{itemize}

Throughout this article, all our hyperlinks in consideration will be time-like. A time-like hyperlink will imply that $\pi_a(L)$, $a=0, 1, 2, 3$, are all links inside their respective 3-dimensional subspace $\pi_a(\bR^4)\subset \bR^4$. See \cite{EH-Lim06}.

We adopt Einstein's summation convention, i.e. we sum over repeated superscripts and subscripts. Indices such as $a,b,c, d$ and greek indices such as $\mu,\gamma, \alpha, \beta$ will take values from 0 to 3; indices labeled $i, j, k$, $\bar{i}, \bar{j}, \bar{k}$ will only take values from 1 to 3. %These indices will keep track of the spatial coordinate $x_i$.

\section{Coloring of matter hyperlink}\label{s.cmh}

Let $\mathfrak{su}(2)$ be the Lie Algebra of $SU(2)$. We need to first choose a basis for $\mathfrak{su}(2)$ and we shall make the following choice,
\beq \breve{e}_1 :=
\frac{1}{2}\left(
  \begin{array}{cc}
    0 &\ 1 \\
    -1 &\ 0 \\
  \end{array}
\right),\ \ \breve{e}_2 :=
\frac{1}{2}\left(
  \begin{array}{cc}
    0 &\ i \\
    i &\ 0 \\
  \end{array}
\right),\ \ \breve{e}_3 :=
\frac{1}{2}\left(
  \begin{array}{cc}
    i &\ 0 \\
    0 &\ -i \\
  \end{array}
\right). \nonumber \eeq

Choose the group $SU(2) \times SU(2)$, which define a spin structure on $\bR^4$. And define the following basis in $\mathfrak{su}(2) \times \mathfrak{su}(2)$,
\begin{align*}
\hat{E}^{01} = (\breve{e}_1, 0),\ \ \hat{E}^{02} = (\breve{e}_2, 0), \ \ \hat{E}^{03} = (\breve{e}_3, 0), \\
\hat{E}^{23} = (0, \breve{e}_1),\ \ \hat{E}^{31} = ( 0, \breve{e}_2), \ \ \hat{E}^{12} = (0, \breve{e}_3),
\end{align*}
and write
\beq \hat{E}^{\tau(1)} = \hat{E}^{23}, \ \ \hat{E}^{\tau(2)} = \hat{E}^{31}, \ \ \hat{E}^{\tau(3)} = \hat{E}^{12}. \nonumber \eeq We will also write $\hat{E}^{\alpha\beta} = -\hat{E}^{\beta\alpha}$.

Using the above basis, we will also define \beq \mathcal{E}^\pm := \sum_{i=1}^3\breve{e}_i \in \mathfrak{su}(2). \label{e.sux.1} \eeq
%and a $4 \times 4$ complex matrix
%\beq \mathcal{E} =
%\left(
%  \begin{array}{cc}
%    -\mathcal{E}^+ &\ 0 \\
%    0 &\ \mathcal{E}^- \\
%  \end{array}
%\right). \label{e.sux.2} \eeq

Let $\rho^\pm: \mathfrak{su}(2) \rightarrow {\rm End}(V^\pm)$ be an irreducible finite dimensional representation, indexed by half-integer and integer values $j_{\rho^\pm} \geq 0$. Without loss of generality, we assume that $\rho^\pm(\hat{E})$ is skew-Hermitian for any $\hat{E} \in \mathfrak{su}(2)$.

The representation $\rho: \mathfrak{su}(2) \times \mathfrak{su}(2) \rightarrow {\rm End}(V^+) \times {\rm End}(V^-)$ will be given by $\rho = (\rho^+, \rho^-)$, with \beq \rho: \alpha_i\hat{E}^{0i} + \beta_j \hat{E}^{\tau(j)} \mapsto \left(\sum_{i=1}^3\alpha_i \rho^+(\breve{e}_i) , \sum_{j=1}^3\beta_j \rho^-(\breve{e}_j) \right). \nonumber \eeq By abuse of notation, we will now write $\rho^+ \equiv (\rho^+, 0)$ and $\rho^- \equiv (0, \rho^-)$ in future and thus $\rho^+(\hat{E}^{0i}) \equiv \rho^+(\breve{e}_i)$, $\rho^-(\hat{E}^{\tau(j)}) \equiv \rho^-(\breve{e}_j)$.

Consider 2 different hyperlinks, $\oL = \{\ol^u:\ u=1, \ldots, \on\}$ and $\uL = \{\ul^v:\ v=1, \ldots, \un\}$. The former will be called a matter hyperlink; the latter will be referred to as a geometric hyperlink. The symbols $u, \bar{u}, v, \bar{v}$ will be indices, taking values in $\mathbb{N}$. They will keep track of the loops in our hyperlinks $\oL$ and $\uL$. The symbols $\on$ and $\un$ will always refer to the number of components in $\oL$ and $\uL$ respectively.

Color the matter hyperlink, which means choose a representation $\rho_u: \mathfrak{su}(2) \times \mathfrak{su}(2) \rightarrow {\rm End}(V_u^+) \times {\rm End}(V_u^-)$ for each component $\ol^u$, $u=1, \ldots, \on$, in the hyperlink $\oL$. Note that we do not color $\uL$, i.e. we do not choose a representation for $\uL$.

In $\mathfrak{su}(2) \times \mathfrak{su}(2)$, the first copy of $\mathfrak{su}(2)$ is generated by $\{\breve{e}_i\}_{i=1}^3$, which corresponds to boost in the $x_i$ direction in the Lorentz group; the second copy of $\mathfrak{su}(2)$ is generated by another independent set $\{\breve{e}_i\}_{i=1}^3$, which corresponds to rotation about the $x_i$-axis in the Lorentz group. When we give a representation $\rho^\pm$ to a colored loop $\ol$, which we interpret as representing a particle, we are effectively assigning values to the translational and angular momentum of this particle.

Each irreducible representation $\rho^\pm$ will define the following Casimir operator,
\begin{align*}
\sum_{i=1}^3 \rho^+(\hat{E}^{0i})\rho^+(\hat{E}^{0i})  = -\xi_{\rho^+} I_{\rho^+},\\
\sum_{i=1}^3 \rho^-(\hat{E}^{\tau(i)})\rho^-(\hat{E}^{\tau(i)}) = -\xi_{\rho^-} I_{\rho^-},
\end{align*}
$I_{\rho^\pm}$ is the $2j_{\rho^\pm} + 1$ identity operator for $V^\pm$ and $\xi_{\rho^\pm} := j_{\rho^\pm}(j_{\rho^\pm}+1)$. Note that the dimension of $V^\pm$ is $2j_{\rho^\pm}+1$. We can interpret $\xi_{\rho^+}$ to be the kinetic energy arising from boost; $\xi_{\rho^-}$ to be the kinetic energy arising from rotation.

Given a colored hyperlink $\oL$ and a hyperlink $\uL$, we also assume that together (by using ambient isotopy if necessary), they form another hyperlink with $\on + \un$ components. Denote this new colored hyperlink by $\chi(\oL, \uL) \equiv \chi(\{\ol^u\}_{u=1}^{\on}, \{\ul^v\}_{v=1}^{\un})$, assumed to be time-like.

Consider an oriented hyperlink $\chi(\ol, \ul)$, made up of 2 distinct oriented loops $\ol$ and $\ul$. In \cite{EH-Lim03} or \cite{EH-Lim06}, we defined the hyperlinking number between $\ol$ and $\ul$ in $\bR \times\bR^3$, denoted as ${\rm sk}(\ol, \ul)$, to distinguish from the linking number between 2 simple closed curves in $\bR^3$. More generally, define for each $u=1, \ldots, \on$, \beq {\rm sk}(\ol^u, \uL):= \sum_{v=1}^{\un}{\rm sk}(\ol^u, \ul^v), \nonumber\eeq calculated from $\chi(\oL, \uL)$.

\section{Volume path integral}

Let $\overline{\mathcal{S}}_\kappa(\bR^4) \subset L^2(\bR^4)$ be a Schwartz space, as defined in \cite{EH-Lim02}. Using the standard coordinates on $\bR^4$, let $\Lambda^1(\bR^3)$ denote the subspace in $\Lambda^1(\bR^4)$ spanned by $\{dx_1, dx_2, dx_3\}$. Define
\begin{align*}
L_\omega :=& \overline{\mathcal{S}}_\kappa(\bR^4) \otimes \Lambda^1(\bR^3)\otimes \mathfrak{su}(2) \times \mathfrak{su}(2), \\
L_e :=& \overline{\mathcal{S}}_\kappa(\bR^4) \otimes \Lambda^1(\bR^3)\otimes V,
\end{align*}
whereby $\bR^4 \times V \rightarrow \bR^4$ is a trivial 4-dimensional vector bundle, with structure group $SO(3,1)$. This implies that $V$ is endowed with a Minkowski metric, $\eta^{ab}$, of signature $(-, +, +, +)$. Let $\{E^\gamma\}_{\gamma=0}^3$ be a basis for $V$.

Given $\omega \in L_\omega$ and $e \in L_e$, we will write
\begin{align*}
\omega =& A^i_{\alpha\beta} \otimes dx_i\otimes \hat{E}^{\alpha\beta} \in \overline{\mathcal{S}}_\kappa(\bR^4) \otimes \Lambda^1(\bR^3)\otimes \mathfrak{su}(2) \times \mathfrak{su}(2), \\
e =& B^i_\gamma \otimes dx_i\otimes E^\gamma \in \overline{\mathcal{S}}_\kappa(\bR^4) \otimes \Lambda^1(\bR^3) \otimes V.
\end{align*}
There is an implied sum over repeated indices.

\begin{rem}
Note that $A^{i}_{\alpha\beta} = -A^{i}_{\beta\alpha}\in \overline{\mathcal{S}}_\kappa(\bR^4)$.
\end{rem}

In \cite{EH-Lim02}, we define the Einstein-Hilbert action, after applying axial gauge fixing, as ($\partial_0 \equiv \partial/\partial x_0$)
\begin{align*}
S_{EH}(e, \omega) :=
\frac{1}{8}&\int_{\bR^4}\epsilon^{abcd}B^1_\gamma B^2_\mu[E^{\gamma \mu}]_{ab} \cdot \partial_0 A^3_{\alpha\beta}[E^{\alpha\beta}]_{cd} dx_1\wedge dx_2 \wedge dx_0 \wedge dx_3\\
+& \frac{1}{8}\int_{\bR^4}\epsilon^{abcd}B^2_\gamma B^3_\mu[E^{\gamma \mu}]_{ab} \cdot \partial_0 A^1_{\alpha\beta}[E^{\alpha\beta}]_{cd} dx_2\wedge dx_3 \wedge dx_0 \wedge dx_1\\
+&\frac{1}{8}\int_{\bR^4}\epsilon^{abcd}B^3_\gamma B^1_\mu[E^{\gamma \mu}]_{ab} \cdot \partial_0 A^2_{\alpha\beta}[E^{\alpha\beta}]_{cd} dx_3\wedge dx_1 \wedge dx_0 \wedge dx_2.
\end{align*}
We sum over repeated indices and $\epsilon^{\mu \gamma \alpha \beta} \equiv \epsilon_{\mu \gamma \alpha \beta}$ is equal to 1 if the number of transpositions required to permute $(0123)$ to $(\mu\gamma\alpha\beta)$ is even; otherwise it takes the value -1.

%Consider two oriented hyperlinks, $\oL = \{\ol^u \}_{u=1}^{\on}$, $\uL = \{\ul^v \}_{v=1}^{\underline{n}}$ in $\bR \times\bR^3$. Color each component of $\oL$ with representation $\rho_u$. The hyperlinks $\oL$ and $\uL$ are entangled together to form an oriented colored hyperlink, denoted by $\chi(\oL, \uL)$.

Let $q \in \bR$ be known as a charge. Define
\begin{align*}
V(\{\ul^v\}_{v=1}^{\underline{n}})(e) :=& \exp\left[ \sum_{v=1}^{\un} \int_{\ul^v} \sum_{\gamma=0}^3 B^i_\gamma \otimes dx_i\right], \\
W(q; \{\ol^u, \rho_u\}_{u=1}^{\on})(\omega) :=& \prod_{u=1}^{\on}\Tr_{\rho_u}  \mathcal{T} \exp\left[ q\int_{\ol^u} A^i_{\alpha\beta} \otimes dx_i\otimes \hat{E}^{\alpha\beta}  \right].
\end{align*}
Here, $\mathcal{T}$ is the time-ordering operator as defined in \cite{CS-Lim02}. And we sum over repeated indices, with $i$ taking values in 1, 2 and 3; $\alpha\neq \beta$ take values in 0, 1, 2, 3 and $B^i_\gamma, A^i_{\alpha\beta} \in \overline{\mathcal{S}}_\kappa(\bR^4)$.

\begin{rem}
The term $\mathcal{T} \exp\left[ q\int_{\ol^u} \omega \right]$ is known as a holonomy operator along a loop $\ol^u$, for a spin connection $\omega$.
\end{rem}

Let $\vec{y}^u \equiv (y_0^u, y_1^u, y_2^u, y_3^u) : I := [0,1] \rightarrow \bR \times \bR^3$ be a parametrization of a loop $\ol^u \subset \oL$, $u=1, \ldots, \on$. We will write $y^u(s) = (y_1^u(s), y_2^u(s), y_3^u(s))$ and $\vec{y}^u(s) \equiv \vec{y}_s^u$. We will also write $\vec{y}^u = (y^u_0, y^u)$. Similarly, choose a parametrization $\vec{\varrho}^{v}: I \rightarrow \bR \times \bR^3$, $v = 1, \ldots, \un$, for each loop $\ul^v \subset \uL$. When the loop is oriented, we will often choose a parametrization which is consistent with the assigned orientation.

Fix a closed and bounded 3-manifold $R \subset \bR^3 \cong \{0\} \times \bR^3$, possibly disconnected with finite number of components. Henceforth, we will refer to $R$ as a compact region. We further assume that $\oL$ is disjoint from $R$.

Now we will proceed to quantize the volume. Using the dynamical variables $\{B_\mu^i\}$ and the Minkowski metric $\eta^{ab}$, we see that the metric $g^{ab} \equiv B^a_\mu\eta^{\mu\gamma}B^b_\gamma$ and the volume $V_R$ is given by \beq V_R(e) := \int_R \sqrt{\epsilon_{ijk}\epsilon_{\bar{i}\bar{j}\bar{k}}g^{i\bar{i}}g^{j\bar{j}}g^{k\bar{k}}}. \nonumber \eeq

Consider the following path integral, \beq \frac{1}{Z}\int_{\omega \in L_\omega,\ e \in L_e}V_R(e)V(\{\ul^v\}_{v=1}^{\un})(e) W(q; \{\ol^u, \rho_u\}_{u=1}^{\on})(\omega)\  e^{i S_{EH}(e, \omega)}\ De D\omega, \label{e.v.11} \eeq whereby $De$ and $D\omega$ are Lebesgue measures on $L_e$ and $L_\omega$ respectively and \beq Z = \int_{\omega \in L_\omega,\ e \in L_e}e^{i S_{EH}(e, \omega)}\ De D\omega. \label{e.ehv.1} \eeq

\begin{rem}\label{r.z.1}
\begin{enumerate}
  \item When $R$ is the empty set, we define $V_\emptyset \equiv 1$, so we write Expression \ref{e.v.11} as $Z(q; \chi(\oL, \uL))$, which in future be termed as the Wilson Loop observable of the colored hyperlink $\chi(\oL, \uL))$.
  \item The volume operator will henceforth be denoted by $\hat{V}_R$ and we will write Expression \ref{e.v.11} as $\hat{V}_R[Z(q; \chi(\oL, \uL))]$.
\end{enumerate}
\end{rem}

\begin{notation}\label{n.r.2}
Let $\rho: I^3 \rightarrow \bR^3$ be any parametrization of a compact region $R \subset \bR^3$. Let $|J_\rho|(r)$ denote the determinant of the Jacobian of $\rho\equiv (\rho_1, \rho_2, \rho_3)$, $r = (r_1, r_2, r_3)$. And write $dr = dr_1 dr_2 dr_3$. We will also write $\vec{\rho}(r) \equiv \vec{\rho}_r \equiv (0, \rho(r)) \in \bR^4$.

Let $L = \{\ol^1, \ldots, \ol^{\on}\}$ be a matter hyperlink. Project each $\ol^u$ into $\bR^3$ to form a knot $l^u$ and let $\mathcal{N}(l^u)$ be a tubular neighborhood of $l^u$. Write $R = \bigcup_{v=1}^{\bar{m}}R_v$ as a disjoint union, such that either $R_v \subseteq \mathcal{N}(l^u)$ for some $u$ or $R_v \cap \mathcal{N}(l^u) = \emptyset$ for every $u = 1, \ldots, \on$. Let $I_v^3 \subset I^3$ such that $\rho: I_v^3 \rightarrow R_v \subset R$ be a parametrization of $R_v$.
\end{notation}

In \cite{EH-Lim02}, we showed that we can define Expression \ref{e.v.11} and write it as the limit as $\kappa$ goes to infinity, of the following expression
\begin{align}
q^2\prod_{\bar{u}=1}^{\on}\Bigg\{&  \Bigg[ \sum_{v=1}^{\bar{m}}\tilde{\kappa}\sum_{u=1}^{\on} \int_{r\in I_v^3}dr |J_\rho|(r)\Bigg|\int_{I^2} d\hat{s}\  \epsilon^{ijk}\left\langle p_\kappa^{\vec{y}_{s}^u}, p_\kappa^{\vec{\rho}(r)} \right\rangle_k y_{i,s}^{u,\prime}y_{j,\bar{s}}^{u,\prime} \nonumber \\
&\hspace{2cm}\times e^{-\kappa^2|y_{\bar{s}}^{u} - \rho(r)|^2/8} \left\langle \partial_0^{-1}q_\kappa^{y_{0,\bar{s}}^{u}}, q_\kappa^{0} \right\rangle
\xi_{\rho_{u}^+}
\Bigg| \Bigg]^{1/\on} \Tr_{\rho_{\bar{u}}^+}\hat{\mathcal{W}}^+_\kappa(q; \ol^{\bar{u}}, \uL) \nonumber\\
+&\Bigg[ \sum_{v=1}^{\bar{m}} \tilde{\kappa}\sum_{u=1}^{\on} \int_{r\in I_v^3}dr |J_\rho|(r)\Bigg|\int_{I^2} d\hat{s}\  \epsilon^{ijk}\left\langle p_\kappa^{\vec{y}_{s}^u}, p_\kappa^{\vec{\rho}(r)} \right\rangle_k y_{i,s}^{u,\prime}y_{j,\bar{s}}^{u,\prime} \nonumber \\
&\hspace{2cm}\times e^{-\kappa^2|y_{\bar{s}}^{u} - \rho(r)|^2/8} \left\langle \partial_0^{-1}q_\kappa^{y_{0,\bar{s}}^{u}}, q_\kappa^{0} \right\rangle
\xi_{\rho_{u}^-}
\Bigg| \Bigg]^{1/\on} \Tr_{\rho_{\bar{u}}^-}\hat{\mathcal{W}}^-_\kappa(q; \ol^{\bar{u}}, \uL)
\Bigg\}, \label{ex.v.3}
\end{align}
the notations used in the expression will be explained in Section \ref{s.vo}. Note that \beq \tilde{\kappa} = \frac{\sqrt\pi}{2}\frac{\kappa}{4}\left(\frac{\kappa}{\sqrt{2\pi}}\right)^2
\left(\frac{\kappa^{2} }{8\pi}\right)^2. \nonumber \eeq See Definition Volume Path Integral in \cite{EH-Lim02}.

\begin{rem}
Expression \ref{ex.v.3} will of course depend on the choice of partition $\{R_v\}_{v=1}^{\bar{m}}$. But its limit as $\kappa$ goes to infinity will be shown to be independent of this partition in a sequel.
\end{rem}

A framed hyperlink $L$ is a hyperlink with a projected framed link $\pi_0(L)$. A framed link has a frame defined on it, which results in the addition of half-twists to each component graph in the link diagram. When we project a framed link on a plane as in Definition 2.6 in \cite{CS-Lim02}, we obtain a graph, each vertex has valency 2 or 4. Each vertex with valency 2 represents a half-twist. We can define an algebraic crossing number for a half-twist.

Let $\tilde{\pi}_i: \bR^3 \rightarrow \Sigma_i$. Our main Theorem \ref{t.main.3} says that the volume operator is computed by projecting $\pi_0(\oL)$ and $R$ on a plane $\Sigma_i$ and counting all the half-twists on the graphs, which are in the interior of the planar set $\tilde{\pi}_i(R)$, weighted by the kinetic energy. The kinetic energy is derived from momentum coming from boosts and also from angular momentum, which are given by $\xi_{\rho^+}$ and $\xi_{\rho^-}$ respectively. Thus the volume operator measures the total kinetic energy of a set of particles, represented by the hyperlink $\oL$. %Thus the eigenvalues of the volume operator has a physical interpretation. %Furthermore, the eigenvalues remain invariant under any diffeomorphism of $\bR \times \bR^3$.

%This will only make sense if we consider half-twists on the loop itself, and not crossings between curves, which does not give any physical meaning. A half-twist on a knot can be thought of as a displaced copy of the original knot, which wraps around the original knot. The displaced copy can be interpreted physically as giving the original knot a boost and a rotation.

It was remarked in \cite{rovelli1995discreteness} that the eigenvalues for the volume operator come from matter in the ambient space. This is consistent with our main result, whereby the eigenvalues are computed from the matter hyperlink $\oL$. In fact, the author in \cite{Mercuri:2010xz} talked about a discrete structure in space-time. Only nodes on a spin network will contribute to the volume, which the author in \cite{rovelli2004quantum} interpret as an ensemble of quanta of volume. This agrees with our computations, whereby half-twists from a link contribute to these `chunks' of space. The only important difference, is that our result interprets the quantum eigenvalues as kinetic energy. As an application, we will explain how quantum gravity solves certain inconsistencies in General Relativity as outlined in \cite{Thiemann:2002nj}.

\section{Volume operator}\label{s.vo}

Let us now explain the notations in Expression \ref{ex.v.3}.

In this article, $\vec{y} \equiv (y_0, y) \in \bR^4$, whereby $y \equiv (y_1, y_2, y_3) \in \bR^3$. We will write
\beq \hat{y}_i =
 \left\{
  \begin{array}{ll}
    (y_2, y_3), & \hbox{$i=1$;} \\
    (y_1, y_3), & \hbox{$i=2$;} \\
    (y_1, y_2), & \hbox{$i=3$.}
  \end{array}
\right. \nonumber \eeq

If $x \in \bR^n$, we will write $(p_\kappa^x)^2$ to denote the $n$-dimensional Gaussian function, center at $x$, variance $1/\kappa^2$. For example, \beq p_\kappa^x(\cdot) = \frac{\kappa^2}{2\pi}e^{-\kappa^2|\cdot - x|^2/4},\ x \in \bR^4. \nonumber \eeq We will also write $(q_\kappa^x)^2$ to denote the 1-dimensional Gaussian function, i.e. \beq q_\kappa^x(\cdot) = \frac{\sqrt{\kappa}}{(2\pi)^{1/4}}e^{-\kappa^2 (\cdot - x)^2/4}. \nonumber \eeq

For $x, y \in \bR^2$, we write \beq \langle p_\kappa^x, p_\kappa^y \rangle = \int_{z\in \bR^2} \frac{\kappa}{\sqrt{2\pi}}e^{-\kappa^2|z - x|^2/4} \frac{\kappa}{\sqrt{2\pi}}e^{-\kappa^2|z - y|^2/4} dz, \nonumber \eeq i.e. we integrate over Lebesgue measure on $\bR^2$. %More generally, given $f, g \in C(\bR^n)$, we will write \beq \langle f, g \rangle \equiv \int_{\bR^n} f\cdot g d\lambda, \nonumber \eeq whereby $\lambda$ is Lebesgue measure.

For $x = (x_0,x_1, x_2, x_3)$, write \beq x(s_a) :=
\left\{
  \begin{array}{ll}
    (s_0,x_1, x_2, x_3), & \hbox{$a=0$;} \\
    (x_0,s_1, x_2, x_3), & \hbox{$a=1$;} \\
    (x_0,x_1, s_2, x_3), & \hbox{$a=2$;} \\
    (x_0,x_1, x_2, s_3), & \hbox{$a=3$.}
  \end{array}
\right. \nonumber \eeq

Let $\partial_a \equiv \partial/\partial x_a$ be a differential operator. There is an operator $\partial_a^{-1}$ acting on a dense subset in $\overline{\mathcal{S}}_\kappa(\bR^4)$, \beq (\partial_a^{-1}f)(x) := \frac{1}{2}\int_{-\infty}^{x_a} f(x(s_a))\ ds_a - \frac{1}{2}\int_{x_a}^{\infty} f(x(s_a))\ ds_a,\ f \in \overline{\mathcal{S}}_\kappa(\bR^4). \label{e.d.1} \eeq Here, $x_a \in \bR$. Notice that $\partial_a\partial_a^{-1}f \equiv f$ and $\partial_a^{-1}f$ is well-defined provided $f$ is in $L^1$.

For each $i = 1, 2, 3$, write
\beq \left\langle p_\kappa^{\vec{x}}, p_\kappa^{\vec{y}} \right\rangle_i :=
\left\langle p_\kappa^{\hat{x}_{i}}, p_\kappa^{\hat{y}_{i}} \right\rangle \left\langle q_\kappa^{x_{i}}, \kappa\partial_0^{-1}q_\kappa^{y_{i}} \right\rangle\left\langle \partial_0^{-1}q_\kappa^{x_{0}}, q_\kappa^{y_{0}}\right\rangle. \nonumber \eeq

Here, \beq \partial_0^{-1}q_\kappa^{x_{0}}(t) \equiv \frac{1}{2}\int_{-\infty}^t q_\kappa^{x_{0}}(\tau)\ d\tau -
\frac{1}{2}\int_{t}^\infty q_\kappa^{x_{0}}(\tau)\ d\tau. \nonumber \eeq

Note that $\left\langle \partial_0^{-1}q_\kappa^{x_{0}}, q_\kappa^{y_{0}}\right\rangle \equiv \left\langle q_\kappa^{y_{0}}, \partial_0^{-1}q_\kappa^{x_{0}} \right\rangle$  means we integrate $\partial_0^{-1}q_\kappa^{x_{0}} \cdot q_\kappa^{y_{0}}$ over $\bR$, using Lebesgue measure. It is well-defined because $q_\kappa^{x_0}$ is in $L^1$.

Recall we parametrize $\ol^u$ and $\ul^v$ using $\vec{y}^u$ and $\vec{\varrho}^v$ respectively, $u=1, \ldots \on$, $v=1, \ldots \un$. Define $\hat{\mathcal{W}}_\kappa^\pm(q; \ol^u, \uL)$ as
\begin{align}
\hat{\mathcal{W}}_\kappa^\pm(q; \ol^u, \uL) := \exp\left[ \mp\frac{iq}{4}\frac{\kappa^3}{4\pi}\sum_{v=1}^{\underline{n}}\int_{I^2}\ d\hat{s}\ \epsilon^{ijk}\left\langle  p_\kappa^{\vec{y}^u_s}, p_\kappa^{\vec{\varrho}^v_{\bar{s}}}\right\rangle_k  y^{u,\prime}_{i,s}\varrho^{v,\prime}_{j,\bar{s}} \otimes \mathcal{E}^\pm\right], \label{e.h.5}
\end{align}
And $\epsilon^{ijk} \equiv \epsilon_{ijk}$ be defined on the set $\{1,2,3\}$, by \beq \epsilon^{123} = \epsilon^{231} = \epsilon^{312} = 1,\ \ \epsilon^{213} = \epsilon^{321} = \epsilon^{132} = -1, \nonumber \eeq if $i,j,k$ are all distinct; 0 otherwise.

Using the lemma in the appendix found in \cite{EH-Lim02}, one can show that \beq  \lim_{\kappa \rightarrow \infty}\Tr_{\rho_u^\pm}\ \hat{\mathcal{W}}_\kappa^\pm(q; \ol^u, \uL) = \Tr_{\rho^\pm_u}\ \exp[\mp\pi iq\ {\rm sk}(\ol^u, \uL) \cdot \mathcal{E}^\pm], \label{e.w.5}  \eeq whereby $\mathcal{E}^\pm$ was defined in Equation (\ref{e.sux.1}).
For a detailed proof, the reader can refer to \cite{EH-Lim03}.

In Remark \ref{r.z.1} for the special case when $R = \emptyset$, we can define the Wilson Loop observable for a colored hyperlink $\chi(\oL, \uL)$,
\begin{align*}
Z(q; \chi(\oL, \uL) )
:=& \lim_{\kappa \rightarrow \infty}\prod_{u=1}^{\on}\left[\Tr_{\rho_u^+}\ \hat{\mathcal{W}}_\kappa^+(q; \ol^u, \uL) + \Tr_{\rho_u^-}\ \hat{\mathcal{W}}_\kappa^-(q; \ol^u, \uL) \right]\\
=& \prod_{u=1}^{\on}\left( \Tr_{\rho^+_u}\ \exp[\pi iq\ {\rm sk}(\ol^u, \uL) \cdot \mathcal{E}^+] +
\Tr_{\rho^-_u}\ \exp[-\pi iq\ {\rm sk}(\ol^u, \uL) \cdot \mathcal{E}^-] \right).
\end{align*}
Note that $\Tr_{\rho^\pm_u}$ means take the trace. See \cite{EH-Lim02}.

Let $p_\theta(x,y) = \frac{1}{ 2\pi \theta}e^{-|x-y|^2/2\theta}$ be the 2-dimensional Gaussian function. Recall it means the transition probability of being in position $y \in \bR^2$, given that our last known position is in $x \in \bR^2$. Using Item 2 in the lemma in the appendix found in \cite{EH-Lim02}, we have
\begin{align}
p_\theta(x,y) =& \int_{z \in \bR^2} p_{\theta/2}(x,z) p_{\theta/2}(z,y)\ dz \nonumber \\
=& \int_{z\in \bR^2}
\frac{1}{2\pi\theta/2}e^{-|x-z|^2/\theta} \cdot \frac{1}{2\pi\theta/2}e^{-|z-y|^2/\theta}\ dz. \label{e.vc.1}
\end{align}

Let $V \subset \bR^3$ be a compact region which contains $x$ and $y$. Then clearly, as $\theta \rightarrow 0$, we have \beq \int_{z\in V^c} \frac{1}{\theta^{3/2}}\frac{2^{3/2}}{(\sqrt{2\pi})^3}e^{-|x-z|^2/\theta} \cdot \frac{1}{\theta^{3/2}}\frac{2^{3/2}}{(\sqrt{2\pi})^3}e^{-|z-y|^2/\theta}\ dz\ \longrightarrow 0. \label{e.vc.5}\eeq

\begin{lem}\label{l.l.6}
Let $s \neq t$ and let $\sgn(t-s)$ denote the sign of $t-s$. We have \beq \frac{\kappa^2}{4\pi}\int_{\bR}\langle \partial_0^{-1}q_\kappa^{s}, q_\kappa^{z}\rangle e^{-\kappa^2|t - z|^2/8} dz \rightarrow \sgn(t-s), \nonumber \eeq as $\kappa \rightarrow \infty$.
\end{lem}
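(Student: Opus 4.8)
The plan is to reduce the claimed limit to an explicit Gaussian computation by first unpacking the definitions of $\partial_0^{-1}q_\kappa^s$ and $q_\kappa^z$, and then recognizing the inner integral as (a rescaling of) a convolution of the error function with a Gaussian kernel. First I would write $\langle \partial_0^{-1}q_\kappa^s, q_\kappa^z\rangle = \int_{\bR}(\partial_0^{-1}q_\kappa^s)(\tau)\, q_\kappa^z(\tau)\,d\tau$, where $(\partial_0^{-1}q_\kappa^s)(\tau) = \tfrac12\int_{-\infty}^\tau q_\kappa^s(u)\,du - \tfrac12\int_\tau^\infty q_\kappa^s(u)\,du$. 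Since $q_\kappa^s$ is an $L^1$ Gaussian of total mass $\int q_\kappa^s = \sqrt{\kappa}\,(2\pi)^{-1/4}\cdot\sqrt{4\pi}/\kappa = \sqrt{2}\,(2\pi)^{1/4}/\sqrt{\kappa}$ — more usefully $(q_\kappa^s)^2$ integrates to $1$ — I would express $\partial_0^{-1}q_\kappa^s$ in terms of $\mathrm{erf}(\kappa(\tau-s)/2)$. The key qualitative fact is that $\partial_0^{-1}q_\kappa^s(\tau)$ converges, after the appropriate normalization, to a multiple of $\sgn(\tau - s)$ pointwise for $\tau\neq s$, and is uniformly bounded; this is the mechanism that produces the $\sgn(t-s)$ on the right-hand side.

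Next I would carry out the $z$-integral. The factor $e^{-\kappa^2|t-z|^2/8}$ is (up to normalization) a Gaussian in $z$ centered at $t$ with variance $4/\kappa^2$; as $\kappa\to\infty$ it becomes a Dirac mass at $z = t$ against any bounded continuous test function. Thus the strategy is: (i) after substituting and rescaling all variables to put the Gaussians in standard form, the integrand factors as (bounded function of $z$) $\times$ (approximate identity concentrating at $z=t$); (ii) the prefactor $\kappa^2/(4\pi)$ is exactly the normalization needed to make $\tfrac{\kappa^2}{4\pi}\int_{\bR} e^{-\kappa^2|t-z|^2/8}\,dz$ tend to a finite nonzero constant — one checks $\int_{\bR}e^{-\kappa^2|t-z|^2/8}dz = \sqrt{8\pi}/\kappa$, so $\tfrac{\kappa^2}{4\pi}\cdot\tfrac{\sqrt{8\pi}}{\kappa} = \kappa\sqrt{2}/\sqrt{4\pi}\to\infty$, which forces the bounded factor $\langle\partial_0^{-1}q_\kappa^s,q_\kappa^z\rangle$ to itself carry a compensating $1/\kappa$, i.e. one must track the joint rate carefully rather than applying the two limits separately. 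So I would instead combine the two Gaussians in $\tau$ and $z$ first (completing the square in both variables simultaneously, as in Equation (\ref{e.vc.1})'s use of the semigroup property), integrate out $z$ exactly to get a single Gaussian in $\tau$ centered near $t$ with $\kappa$-dependent width, and only then let $\kappa\to\infty$, using dominated convergence with the uniform bound on the error-function factor and its pointwise limit $\sgn(t-s)$ valid since $t\neq s$.

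Concretely the cleanest route: write the left side as $\tfrac{\kappa^2}{4\pi}\int_{\bR}\int_{\bR}(\partial_0^{-1}q_\kappa^s)(\tau)\,q_\kappa^z(\tau)\,e^{-\kappa^2|t-z|^2/8}\,d\tau\,dz$, do the $z$-integral (a plain Gaussian integral in $z$ against $q_\kappa^z(\tau)^{\phantom{2}}\! = \sqrt{\kappa}(2\pi)^{-1/4}e^{-\kappa^2(\tau-z)^2/4}$), which yields a constant times $\kappa^{-1/2}$ times a Gaussian in $\tau$ centered at $t$ with variance of order $\kappa^{-2}$; substitute $\tau = t + w/\kappa$; the $\kappa$-powers cancel against $\tfrac{\kappa^2}{4\pi}$ and the residual Jacobian, leaving $\int_{\bR}(\partial_0^{-1}q_\kappa^s)(t + w/\kappa)\cdot(\text{fixed Gaussian density in }w)\,dw$; finally, since $t\neq s$, for every fixed $w$ the argument $t + w/\kappa - s \to t-s \neq 0$, so $(\partial_0^{-1}q_\kappa^s)(t+w/\kappa)$ (suitably normalized inside the constants) tends to $\tfrac12\sgn(t-s)\cdot(\text{mass constant})$, and dominated convergence gives the stated limit $\sgn(t-s)$. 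The main obstacle I anticipate is purely bookkeeping: getting every $\kappa$-power, every factor of $\sqrt{2\pi}$, and every $\tfrac12$ from the definition of $\partial_0^{-1}$ to cancel so that the constant is exactly $1$ (not $\tfrac12$ or $\sqrt{2}$); there is no analytic difficulty once the order of limits is handled via dominated convergence with the $\mathrm{erf}$ bound $|\partial_0^{-1}q_\kappa^s|\le \tfrac12\|q_\kappa^s\|_{L^1}$.
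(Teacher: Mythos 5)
Your proposal is correct and is in substance the paper's own argument: both rest on the normalized pairing $\frac{\kappa}{\sqrt{2\pi}}\langle\partial_0^{-1}q_\kappa^{s}, q_\kappa^{z}\rangle \to \sgn(z-s)$ together with $\frac{\kappa}{2\sqrt{2\pi}}e^{-\kappa^2|t-z|^2/8}$ acting as an approximate identity concentrating at $z=t$, and the constants do cancel to give exactly $1$ as you anticipated. The only cosmetic difference is that the paper imports the first fact from the appendix lemma of \cite{EH-Lim02} and phrases the concentration step as a direct estimate of the deviation from $1$ (splitting the $z$-integral at $|z-t|=\epsilon$ with $t-\epsilon>s$), whereas you re-derive the erf limit explicitly and organize the concentration step as a rescaled dominated-convergence argument.
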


\begin{proof}
Without loss of generality, we assume that $t > s$. Now
\begin{align}
\Big| \frac{\kappa^2}{4\pi}&\int_{\bR}\langle \partial_0^{-1}q_\kappa^{s}, q_\kappa^{z}\rangle e^{-\kappa^2|t - z|^2/8}\ dz -1 \Big| \nonumber\\
\leq& \frac{\kappa}{2\sqrt{2\pi}}\int_{\bR}\left|\frac{\kappa}{\sqrt{2\pi}}\langle \partial_0^{-1}q_\kappa^{s}, q_\kappa^{z}\rangle
-1 \right| \ e^{-\kappa^2|t - z|^2/8}\ dz .\label{e.x.1}
\end{align}
Choose an $\epsilon > 0$ such that $t - \epsilon > s$. Then,
\begin{align*}
\frac{\kappa}{2\sqrt{2\pi}}\int_{(t-\epsilon, t + \epsilon)}\left|\frac{\kappa}{\sqrt{2\pi}}\langle \partial_0^{-1}q_\kappa^{s}, q_\kappa^{z}\rangle
-1 \right| \ e^{-\kappa^2|t - z|^2/8}\ dz \longrightarrow 0,
\end{align*}
as $\kappa \rightarrow \infty$. This is because for any $\delta > 0$, one can choose a $N > 0$ large enough such that for $\kappa > N$, \beq \left|\frac{\kappa}{\sqrt{2\pi}}\langle \partial_0^{-1}q_\kappa^{s}, q_\kappa^{z}\rangle
-1 \right| < \delta, \nonumber \eeq for any $z \in (t-\epsilon, t + \epsilon)$. This follows from Item 1 in the lemma in the appendix found in \cite{EH-Lim02}. So the LHS of Equation (\ref{e.x.1}) converges to 0.
\end{proof}

%Observe that we can split the compact region $R$ into a finite number of disjoint regions. So the integral can be written as a finite sum of integrals, each integral is over a region.

\begin{lem}\label{l.v.1}
Let $\ol^u$ and $\ol^{\bar{u}}$ be 2 distinct open curves in $\bR \times \bR^3$ and let $\vec{y}^u,\ \vec{y}^{\bar{u}}: I \rightarrow R \subset \bR^3$ be parametrizations of $\ol^u$ and $\ol^{\bar{u}}$ respectively. Assume that the time components take on a definite sign, given by $\sgn(y_0^u)$ and $\sgn(y_0^{\bar{u}})$ respectively.

Project them into $\bR^3$ to form 2 distinct open curves $l^u$, $l^{\bar{u}}$ respectively, which when projected onto the plane $\Sigma_k$ gives a crossing which we will denote as $p_k$. Recall the algebraic number $\varepsilon(p_k)$ of a crossing $p_k$, given in Definition 2.9 (Algebraic crossing number) in \cite{CS-Lim02}. Note that the algebraic crossing for each of the $p_k$ is the same, which we will write as $\varepsilon$, for each $k=1, 2, 3$.

Let \beq \Lambda_\kappa =  \frac{\kappa}{4\pi}\left(\frac{\kappa}{\sqrt{2\pi}}\right)^2\left(\frac{\kappa^2}{8\pi} \right)^2. \nonumber \eeq
Let $\hat{s} = (s,\bar{s})$, $\hat{u} = (u, \bar{u})$ and write \beq \int_0^1 \int_0^1 ds d\bar{s} \ \epsilon^{ijk}y_{i,s}^{u,\prime}y_{j,\bar{s}}^{\bar{u},\prime}  \equiv \overline{\iint}_{\hat{s}, \hat{u}}^k . \nonumber \eeq

Let $R \subset \bR^3$ be a compact region which contains the curves $l^u$ and $l^{\bar{u}}$ and let $\rho: I^3 \rightarrow \bR^3$ be a parametrization for $R$. Refer to Section \ref{s.vo}. Then we have
\begin{align*}
\Lambda_\kappa&\int_{I^3} \Bigg[ \overline{\iint}_{\hat{s}, \hat{u}}^k\left\langle p_\kappa^{\vec{y}_{s}^u}, p_\kappa^{\vec{\rho}(r)} \right\rangle_k \left\langle \partial_0^{-1}q_\kappa^{y_{0,\bar{s}}^{\bar{u}}}, q_\kappa^{0} \right\rangle  e^{-\kappa^2|y_{\bar{s}}^{\bar{u}} - \rho(r)|^2/8} \Bigg]\ |J_\rho|(r)\ dr\\
& \hspace{1cm}\longrightarrow \sum_{k=1}^3\varepsilon(p_k)\sgn(y_0^u)\sgn(y_0^{\bar{u}}) = 3\varepsilon\ \sgn(y_0^u)\sgn(y_0^{\bar{u}}),
\end{align*}
as $\kappa \rightarrow \infty$.
\end{lem}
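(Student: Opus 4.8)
The plan is to unfold the kernel $\langle p_\kappa^{\vec y_s^u}, p_\kappa^{\vec\rho(r)}\rangle_k$ into its three defining factors, rewrite $\int_{I^3}(\cdots)|J_\rho|(r)\,dr$ as the integral of a point $p=\rho(r)$ over $R$ against the volume element $|J_\rho|(r)\,dr$, and then evaluate the resulting integral by carrying out the integrations one block of variables at a time — first over $p\in R$, then over $(s,\bar s)\in I^2$ — reducing each inner integral to a genuine finite-dimensional Gaussian integral whose $\kappa\to\infty$ behaviour is explicit. The underlying mechanism is that $e^{-\kappa^2|y_{\bar s}^{\bar u}-p|^2/8}$ together with the transverse factor $\langle p_\kappa^{\hat y_{k,s}^u}, p_\kappa^{\widehat{\rho(r)}_k}\rangle$ pins $p$ (to within $\kappa^{-1}$) onto both projected curves, hence pins $(s,\bar s)$ to the parameters $(s_k,\bar s_k)$ of the crossing $p_k$ in the $\Sigma_k$-projection; away from a shrinking neighbourhood of $(s_k,\bar s_k)$ the integrand is exponentially small in $\kappa$, so only the crossing contributes.

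For the inner $p$-integral, write $p=y_{\bar s}^{\bar u}+z$ with $z\in\bR^3$, split $z=(z_k,\hat z_k)$ into its $k$-th coordinate and the transverse pair, and note that for $\bar s$ near $\bar s_k$ the point $y_{\bar s}^{\bar u}$ is interior to $R$ (as $l^{\bar u}\subset R$), so up to an exponentially small error the $z$-integral runs over all of $\bR^3$ and factors. Using $\langle p_\kappa^{x}, p_\kappa^{y}\rangle=e^{-\kappa^2|x-y|^2/8}$ for $x,y\in\bR^2$ (a one-line Gaussian computation from the definition), the transverse part $\int_{\bR^2}e^{-\kappa^2|\hat z_k|^2/8}\,\langle p_\kappa^{\hat y_{k,s}^u}, p_\kappa^{\hat y_{k,\bar s}^{\bar u}+\hat z_k}\rangle\,d\hat z_k$ is a Gaussian convolution equal to $\tfrac{4\pi}{\kappa^2}\,e^{-\kappa^2|\hat y_{k,s}^u-\hat y_{k,\bar s}^{\bar u}|^2/16}$, while the one-dimensional part $\int_\bR e^{-\kappa^2 z_k^2/8}\,\langle q_\kappa^{y_{k,s}^u},\kappa\partial_0^{-1}q_\kappa^{y_{k,\bar s}^{\bar u}+z_k}\rangle\,dz_k$, after the same exact-Gaussian manipulation as in the proof of Lemma \ref{l.l.6} and using $y_{k,s_k}^u\neq y_{k,\bar s_k}^{\bar u}$ (the curves are disjoint in $\bR^3$, so their $k$-th coordinates differ at the crossing), converges to $\tfrac{4\pi}{\kappa}\,\sgn(y_{k,s}^u-y_{k,\bar s}^{\bar u})$. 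Separately, each of the two ``time'' factors is handled by the estimate $\kappa\,\langle\partial_0^{-1}q_\kappa^{a},q_\kappa^{0}\rangle\to-\sqrt{2\pi}\,\sgn(a)$ (again a Lemma \ref{l.l.6}-type computation, uniform for $a$ bounded away from $0$); using the hypothesis that $y_{0,\cdot}^u$ and $y_{0,\cdot}^{\bar u}$ have constant signs $\sgn(y_0^u)$, $\sgn(y_0^{\bar u})$, their product contributes $\tfrac{2\pi}{\kappa^2}\,\sgn(y_0^u)\sgn(y_0^{\bar u})$.

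After the $p$-integral and the time factors, the leading behaviour as $\kappa\to\infty$ is $\tfrac{16\pi^2}{\kappa^3}\cdot\tfrac{2\pi}{\kappa^2}\,\sgn(y_0^u)\sgn(y_0^{\bar u})$ times $\sum_{k=1}^3\int_{I^2}\epsilon^{ijk}y_{i,s}^{u,\prime}y_{j,\bar s}^{\bar u,\prime}\,e^{-\kappa^2|\Phi_k(s,\bar s)|^2/16}\,\sgn(y_{k,s}^u-y_{k,\bar s}^{\bar u})\,ds\,d\bar s$, where $\Phi_k(s,\bar s):=\hat y_{k,s}^u-\hat y_{k,\bar s}^{\bar u}\in\bR^2$ vanishes exactly at the crossing parameters $(s_k,\bar s_k)$. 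Since a crossing of a link diagram is a transverse double point, $D\Phi_k$ is invertible at $(s_k,\bar s_k)$, with $|\det D\Phi_k|=|\epsilon^{ijk}y_{i,s}^{u,\prime}y_{j,\bar s}^{\bar u,\prime}|$; on a shrinking neighbourhood of $(s_k,\bar s_k)$ the substitution $w=\Phi_k(s,\bar s)$ is a diffeomorphism and cancels the factor $|\epsilon^{ijk}y_{i,s}^{u,\prime}y_{j,\bar s}^{\bar u,\prime}|$ against the Jacobian (the complement being exponentially small), leaving $\int_{\bR^2}\sgn\!\big(\epsilon^{ijk}y_{i}^{u,\prime}y_{j}^{\bar u,\prime}\big)\,\sgn(y_k^u-y_k^{\bar u})\,e^{-\kappa^2|w|^2/16}\,dw$. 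The two sign factors are continuous in $w$ and at $w=0$ equal the algebraic crossing number $\varepsilon(p_k)$ of Definition 2.9 in \cite{CS-Lim02} — indeed, taking the over-strand of $p_k$ to be the one with larger $k$-th coordinate, one has $\varepsilon(p_k)=\sgn(\epsilon^{ijk}y_{i,s_k}^{u,\prime}y_{j,\bar s_k}^{\bar u,\prime})\,\sgn(y_{k,s_k}^u-y_{k,\bar s_k}^{\bar u})$, independently of which strand is over — and $\int_{\bR^2}e^{-\kappa^2|w|^2/16}\,dw=\tfrac{16\pi}{\kappa^2}$, so the $(s,\bar s)$-integral tends to $\tfrac{16\pi}{\kappa^2}\,\varepsilon(p_k)$. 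Collecting constants, $\tfrac{16\pi^2}{\kappa^3}\cdot\tfrac{2\pi}{\kappa^2}\cdot\tfrac{16\pi}{\kappa^2}=\tfrac{512\pi^4}{\kappa^7}=1/\Lambda_\kappa$, and therefore the whole expression converges to $\sum_{k=1}^3\varepsilon(p_k)\,\sgn(y_0^u)\sgn(y_0^{\bar u})=3\varepsilon\,\sgn(y_0^u)\sgn(y_0^{\bar u})$.

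The bookkeeping of the Gaussian constants is routine; the real difficulty is making the localization rigorous, since all the Gaussians concentrate on the common scale $\kappa^{-1}$ and one cannot simply pull slowly-varying factors out of them. One must instead (i) reduce each inner integral to an exact finite-dimensional Gaussian integral plus an error that is $o(\cdot)$ of the leading term; (ii) justify replacing $\epsilon^{ijk}y_{i,s}^{u,\prime}y_{j,\bar s}^{\bar u,\prime}$, $\sgn(y_{k,s}^u-y_{k,\bar s}^{\bar u})$ and $|J_\rho|$ by their values at the crossing, which is precisely where the transversality of the crossing (non-degeneracy of $D\Phi_k$, i.e. $\epsilon^{ijk}y_{i,s_k}^{u,\prime}y_{j,\bar s_k}^{\bar u,\prime}\neq 0$) is used; and (iii) discard the contributions of $\bR^3\setminus R$ and of $(s,\bar s)$ away from $(s_k,\bar s_k)$, both exponentially small in $\kappa$. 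One should also be careful to integrate in the order $p=\rho(r)$ first (for frozen $(s,\bar s)$) and $(s,\bar s)$ afterwards, so that the factorizations above are clean; getting these interchanges and error estimates right is the step I expect to be the main obstacle.
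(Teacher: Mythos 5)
Your proposal follows essentially the same route as the paper's proof: factor the kernel $\langle\cdot,\cdot\rangle_k$, replace $\int_{I^3}|J_\rho|\,dr$ by an integral over $\bR^3$ minus an exponentially small contribution from $R^c$, evaluate the transverse Gaussian convolution exactly and the one-dimensional and time factors via the Lemma \ref{l.l.6} mechanism, and then localize the remaining $(s,\bar s)$-integral at the crossing to extract $\varepsilon(p_k)$. The only difference is that you spell out the final localization (the substitution $w=\Phi_k(s,\bar s)$ and the transversality of the crossing) explicitly, whereas the paper delegates that step to the appendix lemma of \cite{EH-Lim02}; your constants check out against $\Lambda_\kappa=\kappa^7/512\pi^4$.
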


\begin{proof}

A direct computation will give \beq \left\langle p_\kappa^{\vec{y}_s^u}, p_\kappa^{(0,\rho(r))} \right\rangle_k = e^{-\kappa^2|\hat{y}_{k,s}^{u} - \hat{\rho}_k(r)|^2/8}\cdot \langle q_\kappa^{y_{k,s}^u}, \kappa \partial_0^{-1}q_\kappa^{\rho_k(r)}\rangle \cdot\langle \partial_0^{-1}q_\kappa^{y_{0,s}^u}, q_\kappa^{0}\rangle. \nonumber \eeq

Now, as $\kappa \rightarrow \infty$, Item 1 in the lemma in the appendix found in \cite{EH-Lim02} says that \beq \eta_\kappa(\hat{s}) := \frac{\kappa}{\sqrt{2\pi}}\left\langle \partial_0^{-1}q_\kappa^{y_{0,s}^u}, q_\kappa^{0} \right\rangle \cdot \frac{\kappa}{\sqrt{2\pi}}\left\langle \partial_0^{-1}q_\kappa^{y_{0,\bar{s}}^{\bar{u}}}, q_\kappa^{0} \right\rangle \rightarrow \sgn(y_0^u)\sgn(\varrho_0^{\bar{u}}). \label{e.v.1} \eeq

Let $R = \rho(I^3)$. Write \beq
r= (r_1, r_2, r_3),\ d\omega = dr_1dr_2dr_3,\ d\hat{s} = ds d\bar{s}\ {\rm and}\ \Delta_\kappa = \frac{\kappa}{2\sqrt{2\pi}}\left(\frac{\kappa^2}{8\pi} \right)^2, \nonumber \eeq
and \beq \frac{1}{\sqrt{2\pi}}\langle q_\kappa^{y_{k,s}^u}, \kappa\partial_0^{-1}q_\kappa^{a}\rangle =: \chi_{\kappa,s}^k(a),\ a \in \bR. \nonumber \eeq

Let $z \equiv (z_1, z_2, z_3) \in \bR^3$. We have
\begin{align*}
\Delta_\kappa &\sum_{k=1}^3\int_{I^3} \overline{\iint}_{\hat{s}, \hat{u}}^k e^{-\kappa^2|\hat{y}_{k,s}^{u} - \hat{\rho}_k(r)|^2/8}e^{-\kappa^2|y_{\bar{s}}^{\bar{u}} - \rho(r)|^2/8}\chi_{\kappa,s}^k(\rho_k(r))\ |J_\rho|(r)\ dr \\
=& \Delta_\kappa\sum_{k=1}^3 \int_{\bR^3}\overline{\iint}_{\hat{s}, \hat{u}}^ke^{-\kappa^2|\hat{y}_{k,s}^{u} - \hat{z}_k|^2/8}e^{-\kappa^2|y_{\bar{s}}^{\bar{u}} - z|^2/8}\chi_{\kappa,s}^k(z_k)\ dz  \\
&- \Delta_\kappa\sum_{k=1}^3 \int_{R^c}\overline{\iint}_{\hat{s}, \hat{u}}^ke^{-\kappa^2|\hat{y}_{k,s}^{u} - \hat{z}_k|^2/8}e^{-\kappa^2|y_{\bar{s}}^{\bar{u}} - z|^2/8}\chi_{\kappa,s}^k(z_k)\ dz  \\
=& \sum_{k=1}^3 \Delta_\kappa\int_{\bR^3}\overline{\iint}_{\hat{s}, \hat{u}}^ke^{-\kappa^2|\hat{y}_{k,s}^{u} - \hat{z}_k|^2/8} e^{-\kappa^2|\hat{y}_{k,\bar{s}}^{\bar{u}} - \hat{z}_k|^2/8}
e^{-\kappa^2|y_k^{\bar{u}} - z_k|/8}\chi_{\kappa,s}^k(z_k)
d\hat{z}_k dz_k \\
&- \sum_{k=1}^3\Delta_\kappa \int_{R^c}\overline{\iint}_{\hat{s}, \hat{u}}^k e^{-\kappa^2|\hat{y}_{k,s}^{u} - \hat{z}_k|^2/8}e^{-\kappa^2|y_{\bar{s}}^{\bar{u}} - z|^2/8}\chi_{\kappa,s}^k(z_k)\ dz  \\
:=& K_1(\kappa) + K_2(\kappa).
\end{align*}
It is straightforward to see that the second term $K_2(\kappa)$ converges to 0 as $\kappa$ goes to infinity, since $R^c$ does not contain any points in the curves $l^u$ and $l^{\bar{u}}$. See Expression \ref{e.vc.5}.

Note the following:
\begin{align}
\frac{\kappa}{2\sqrt{2\pi}}&\int_{\bR}\frac{\kappa}{\sqrt{2\pi}}\langle \partial_0^{-1}q_\kappa^{y_{k,s}^u}, q_\kappa^{z_k}\rangle e^{-\kappa^2|y_{k,\bar{s}}^{\bar{u}} - z_k|^2/8} dz_k =: \zeta_{\kappa}^k(\hat{s}) \longrightarrow \sgn( y_{k,\bar{s}}^{\bar{u}}- y_{k,s}^u ), \label{e.vc.2}\\
\left(\frac{\kappa^2}{8\pi} \right)^2&\int_{\bR^2}e^{-\kappa^2|\hat{y}_{k}^{u}(s) - \hat{z}_k|^2/8}e^{-\kappa^2|\hat{y}^{\bar{u}}(\bar{s}) - \hat{z}_k|^2/8}d\hat{z}_k = \frac{\kappa^2}{16\pi}e^{-\kappa^2|\hat{y}^{\bar{u}}(\bar{s}) - \hat{y}_{k}^{u}(s)|/16}. \label{e.vc.3}
\end{align}
The first equality follows from Lemma \ref{l.l.6} and the last equality follows from Equation (\ref{e.vc.1}), with $\theta = 8/\kappa^2$.

Together with Equations (\ref{e.vc.2}) and (\ref{e.vc.3}), and because $\langle \partial_0^{-1}f, g \rangle = -\langle f, \partial_0^{-1} g\rangle$, we see that
\beq K_1(\kappa) = \sum_{k=1}^3 \int_{I^2} \epsilon^{ijk}\frac{\kappa^2}{16\pi}e^{-\kappa^2|\hat{y}_k^{\bar{u}}(\bar{s}) - \hat{y}_{k}^{u}(s)|^2/16}[-\zeta_{\kappa}^k(\hat{s})]\ y_{i,s}^{u,\prime}y_{j,\bar{s}}^{\bar{u},\prime}\  d\hat{s}. \nonumber \eeq

So the limit is equivalent to compute the limit of
\begin{align}
\sum_{k=1}^3\int_{I^2}& \epsilon^{ijk}\frac{\kappa^2}{16\pi}e^{-\kappa^2|\hat{y}_{k,\bar{s}}^{\bar{u}} - \hat{y}_{k,s}^{u}|^2/16} [-\zeta_{\kappa}^k(\hat{s})]\eta_\kappa(\hat{s})\ y_{i,s}^{u,\prime}y_{j,\bar{s}}^{\bar{u},\prime}\  d\hat{s} . \label{e.vc.4}
\end{align}

Using a similar argument given in the lemma in the appendix found in \cite{EH-Lim02} and from Equations (\ref{e.v.1}) and (\ref{e.vc.2}), one can show that the above Expression \ref{e.vc.4} converges to \beq \sum_{k=1}^3\varepsilon(p_k)\sgn(y_0^u)\sgn(y_0^{\bar{u}}). \nonumber \eeq This completes the proof.
\end{proof}

\begin{rem}
Note that given a link $L \subset \bR^3$, we need to project it onto a plane to obtain a link diagram. It really does not matter which plane which choose to project onto. Note that we project the knot onto 3 planes $\Sigma_1, \Sigma_2$ and $\Sigma_3$. Now a set of crossings on a link diagram on a plane $\Sigma_1$ should be distinguished from a set of crossings on a link diagram on a plane $\Sigma_2$ or $\Sigma_3$. However, when we sum up the algebraic crossings, we should get the same value, regardless of the plane we choose. This accounts for the factor 3. See \cite{EH-Lim02}.
\end{rem}

By virtue of Lemma \ref{l.v.1}, we see that only relevant crossings formed from curves in $\bR^3$ will contribute to the limit.

For a fixed $u = 1, \ldots, \on$, the expression \beq \Lambda_\kappa\int_{I^3} \Bigg[ \overline{\iint}_{\hat{s}, \hat{u}}^k\left\langle p_\kappa^{\vec{y}_{s}^u}, p_\kappa^{\vec{\rho}(r)} \right\rangle_k \left\langle \partial_0^{-1}q_\kappa^{y_{0,\bar{s}}^u}, q_\kappa^{0} \right\rangle  e^{-\kappa^2|y_{\bar{s}}^{u} - \rho(r)|^2/8} \Bigg]\ |J_\rho|(r)\ dr,\ \hat{u} \equiv (u, u) \label{e.v.2} \eeq does not have a limit. This is because Equation (\ref{e.vc.2}) and Expression \ref{e.vc.4} are not defined. This problem is termed as the self-linking problem for a knot in $\bR^3$.

The solution would be to consider a frame $v^u \in \bR^3 \cong \{0\} \times \bR^3$ on the knot $\pi_0(\ol^u)$ and define a new loop $\ol^{u, \epsilon} = \ol^u + \epsilon v^u$, i.e. we consider a displaced copy of $\ol^u$, call it $\ol^{u, \epsilon}$ and project both $\ol^u$ and $\ol^{u, \epsilon}$ onto $\Sigma_k \subset \bR^3$ to form a link diagram as defined in Definition 2.6 in \cite{CS-Lim02}. This will give us the self-linking number of $\pi_0(\ol^u)$. See \cite{CS-Lim02} for a detailed description of the self-linking number.

Let $\vec{y}^{u, \epsilon}$ be a parametrization of $\ol^{u, \epsilon}$. We now define the limit of Expression as \ref{e.v.2} as \beq \lim_{\epsilon \rightarrow 0}\lim_{\kappa \rightarrow 0}\Lambda_\kappa\int_{I^3} \Bigg[ \overline{\iint}_{\hat{s}, \hat{u}}^k\left\langle p_\kappa^{\vec{y}_{s}^u}, p_\kappa^{\vec{\rho}(r)} \right\rangle_k \left\langle \partial_0^{-1}q_\kappa^{y_{0,\bar{s}}^{u,\epsilon}}, q_\kappa^{0} \right\rangle  e^{-\kappa^2|y_{\bar{s}}^{u,\epsilon} - \rho(r)|^2/8} \Bigg]\ |J_\rho|(r)\ dr. \label{e.v.3} \eeq

From the proof of Lemma \ref{l.v.1}, one can show that \beq
\lim_{\kappa \rightarrow 0}\frac{\kappa}{4\pi}\left( \frac{\kappa^2}{8\pi}\right)^2\int_{I^3} \Bigg[ \overline{\iint}_{\hat{s}, \hat{u}}^k\left\langle p_\kappa^{\hat{y}_{k,s}^u}, p_\kappa^{\hat{\rho}_k(r)} \right\rangle \cdot \left\langle q_\kappa^{y_{k,s}^{u}}, \kappa \partial_0^{-1}q_\kappa^{\rho_k(r)} \right\rangle   e^{-\kappa^2|y_{\bar{s}}^{u,\epsilon} - \rho(r)|^2/8} \Bigg]\ |J_\rho|(r)\ dr \nonumber \eeq
is equal to the sum of the algebraic crossing number of the crossings formed between $y^u$ and $y^{u,\epsilon}$.

If we further assume that $R$ is so small that it fits inside a small tubular neighborhood of $y^u$ and contains segments of $y^u$, then one can show that when we take the limit as $\epsilon$ goes to 0, these crossings coincide, and were termed as half-twists in \cite{CS-Lim02}. We refer the reader to \cite{CS-Lim02} for the details. We will state this result as a corollary of Lemma \ref{l.v.1}.

\begin{defn}\label{d.vx.1}
Let $\tilde{\pi}_3: \bR^3 \rightarrow \Sigma_3$. Given a compact region $R$ and a framed knot $l$, project $R$ to be a planar set $\tilde{\pi}_3(R) \subset \Sigma_3$ and $l$ to be a graph as defined in \cite{CS-Lim02}, such that each vertex has valency 2 or 4. We define ${\rm TDP}(l; R)$ to be the set of all half-twists from the framed knot $l$, i.e. vertices with valency 2 which are in the interior of $\tilde{\pi}_3(R)$.
\end{defn}

\begin{rem}
We can define the set of all half-twists using other planes. If we project it on a different plane, the set of half-twists will of course be different. But as our final result will only depend on the number of half-twists, so it really does not matter which plane we project onto.
\end{rem}

\begin{cor}\label{c.v.2}
Recall the algebraic number $\varepsilon(p)$ of a half-twist $p$. The limit of Expression \ref{e.v.2} is defined by Expression \ref{e.v.3}, which is equal to \beq  3\sum_{p \in {\rm TDP}(\pi_0(\ol^u); R)} \varepsilon(p). \nonumber \eeq Here, ${\rm TDP}(\pi_0(\ol^u); R)$ refers to the set of half-twists from the knot $\pi_0(\ol^u)$, as defined in Definition \ref{d.vx.1}.
\end{cor}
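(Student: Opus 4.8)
The strategy is to apply Lemma \ref{l.v.1} not to a self-pairing of $\ol^u$ — which, as noted, has no limit — but to the pair of \emph{distinct} curves $\ol^u$ and its framed pushoff $\ol^{u,\epsilon}=\ol^u+\epsilon v^u$, and then to let $\epsilon\to 0$. Fix $\epsilon>0$ small enough that $\ol^u$ and $\ol^{u,\epsilon}$ are disjoint, with parametrization $\vec y^{u,\epsilon}$; since $v^u\in\{0\}\times\bR^3$ is purely spatial, $y_0^{u,\epsilon}\equiv y_0^u$. For each plane $\Sigma_k$ the projected curves $l^u$ and $l^{u,\epsilon}$ meet in finitely many transverse crossings, and because $R$ lies in a thin tubular neighborhood of $y^u$, the crossings relevant to the integrand in Expression \ref{e.v.3} are exactly those whose projection lies in the interior of $\tilde\pi_k(R)$; call this finite set $P$, and for $p\in P$ let $(s_p,\bar s_p)$ be the corresponding parameter pair.

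First I would localize the $\kappa\to\infty$ limit. Cover $I^2$ by small disjoint closed boxes $U_p$ around the points $(s_p,\bar s_p)$, $p\in P$, plus a remainder $U_0$. On $U_0$ the projected curves stay uniformly separated, so the Gaussian factors force the $U_0$-contribution to $0$ as $\kappa\to\infty$, by the same estimate used for $K_2(\kappa)$ in the proof of Lemma \ref{l.v.1} (compare Expression \ref{e.vc.5}). On each $U_p$ we are exactly in the setting of Lemma \ref{l.v.1}: the two curves are distinct there and their time components have a definite sign near the crossing, so the $U_p$-contribution converges to $\varepsilon(p_k)\,\sgn(y_0^u(s_p))\,\sgn(y_0^{u,\epsilon}(\bar s_p))$. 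Summing over $p\in P$, then over $k=1,2,3$, and using the Remark following Lemma \ref{l.v.1} (a crossing has the same algebraic number in all three projections, hence the factor $3$), the inner expression of \ref{e.v.3} converges as $\kappa\to\infty$ to $3\sum_{p\in P}\varepsilon(p)\,\sgn(y_0^u(s_p))\,\sgn(y_0^{u,\epsilon}(\bar s_p))$, which is the statement recorded just after Expression \ref{e.v.3}.

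It remains to let $\epsilon\to0$. Since $y_0^{u,\epsilon}\equiv y_0^u$ and the crossing parameters coalesce ($\bar s_p\to s_p$), each sign product tends to $\sgn(y_0^u(s_p))^2=1$. By the analysis of framed link diagrams in \cite{CS-Lim02} — using that $R$ is thin enough to contain no genuine valency-$4$ self-crossing of $l^u$ — the crossings in $P$ converge as $\epsilon\to 0$ to precisely the half-twists of the framed knot $\pi_0(\ol^u)$ in the interior of $\tilde\pi_k(R)$, with algebraic numbers preserved, so $P$ is in an algebraic-number-preserving bijection with ${\rm TDP}(\pi_0(\ol^u);R)$. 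Hence the double limit in Expression \ref{e.v.3} equals $3\sum_{p\in {\rm TDP}(\pi_0(\ol^u);R)}\varepsilon(p)$. The main obstacle is making the two limits rigorous in this order: the self-linking integrand \ref{e.v.2} is genuinely singular along the diagonal at $\epsilon=0$, so one must check that, after the $\epsilon>0$ regularization, the localization and the vanishing of the $U_0$-part are uniform enough to take $\kappa\to\infty$ for each fixed $\epsilon$ and then pass to $\epsilon\to 0$; and one must import the non-trivial geometric fact from \cite{CS-Lim02} that, for such thin $R$, the $\epsilon\to 0$ limit of the inter-curve crossings inside $R$ is exactly ${\rm TDP}(\pi_0(\ol^u);R)$. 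Both ingredients are supplied in \cite{CS-Lim02} and in the proof of Lemma \ref{l.v.1}, so the corollary follows by assembling them.
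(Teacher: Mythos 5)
Your proposal follows essentially the same route as the paper: the paper states the corollary as a consequence of the framing regularization $\ol^{u,\epsilon}=\ol^u+\epsilon v^u$, an application of (the proof of) Lemma \ref{l.v.1} to the now-distinct pair of curves yielding three times the sum of algebraic crossing numbers inside $R$, and the $\epsilon\to 0$ limit in which these crossings become the half-twists of ${\rm TDP}(\pi_0(\ol^u);R)$, with the remaining geometric and analytic details deferred to \cite{CS-Lim02}. Your write-up supplies a more explicit localization of the $\kappa$-limit near the crossings and a correct accounting of the sign factors (which collapse to $1$ since the pushoff shares the time component of $\ol^u$), but the underlying argument is the one the paper intends.
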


The limit as $\kappa$ goes to infinity, will give us the following result.

\begin{lem}\label{l.v.2}
Refer to Notation \ref{n.r.2}. Let $R_v:= \rho(I_v^3)$. Recall ${\rm TDP}(\pi_0(\ol^u); R_v)$ refers to the set of half-twists from the knot $\pi_0(\ol^u)$, which are in the interior of $\tilde{\pi}_3(R_v)$ and let $\left| {\rm TDP}(\pi_0(\ol^u); R_v) \right|$ be the total number of half-twists in the set.

Let \beq \Lambda_\kappa =  \frac{\kappa}{4\pi}\left(\frac{\kappa}{\sqrt{2\pi}}\right)^2\left(\frac{\kappa^2}{8\pi} \right)^2. \nonumber \eeq Let $\hat{s} = (s,\bar{s})$ and write
\beq \sum_{u=1}^{\on}\int_0^1 \int_0^1ds d\bar{s}\ \epsilon^{ijk}y_{i,s}^{u,\prime}y_{j,\bar{s}}^{u,\prime}   \equiv \overline{\iint}_{\hat{s}, u}^k. \nonumber \eeq

Then
\begin{align*}
&\lim_{\kappa \rightarrow \infty}\Lambda_\kappa\int_{I_v^3} \Bigg|\overline{\iint}_{\hat{s}, u}^k  \left\langle p_\kappa^{\vec{y}_{s}^u}, p_\kappa^{\vec{\rho}(r)} \right\rangle_k \left\langle \partial_0^{-1}q_\kappa^{y_{0,\bar{s}}^{u}}, q_\kappa^{0} \right\rangle e^{-\kappa^2|y_{\bar{s}}^{u} - \rho(r)|^2/8}  \ \frac{\xi_{\rho_{u}^\pm}}{3}\Bigg|\ |J_\rho|(r)\ dr\\
& :=\lim_{\epsilon \rightarrow 0}\lim_{\kappa \rightarrow 0}\Lambda_\kappa\int_{I_v^3}\Bigg|  \overline{\iint}_{\hat{s}, u}^k\left\langle p_\kappa^{\vec{y}_{s}^u}, p_\kappa^{\vec{\rho}(r)} \right\rangle_k \left\langle \partial_0^{-1}q_\kappa^{y_{0,\bar{s}}^{u,\epsilon}}, q_\kappa^{0} \right\rangle  e^{-\kappa^2|y_{\bar{s}}^{u,\epsilon} - \rho(r)|^2/8} \ \frac{\xi_{\rho_{u}^\pm}}{3} \Bigg|\ |J_\rho|(r)\ dr \\
&= \left|{\rm TDP}(\pi_0(\ol^u); R_v)  \right|\xi_{\rho_u^\pm}.
\end{align*}
\end{lem}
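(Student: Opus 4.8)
### Proof proposal

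The plan is to reduce Lemma~\ref{l.v.2} to Corollary~\ref{c.v.2} by handling three separate issues in sequence: the scalar factor $\xi_{\rho_u^\pm}/3$, the absolute value, and the summation over $u$ together with the restriction to $R_v$. Since $\xi_{\rho_u^\pm}$ is a constant (depending only on the colour $\rho_u$, not on $\kappa$, $\epsilon$ or the integration variables), it pulls straight out of the integral and the limit, and the factor $1/3$ is there precisely to cancel the factor $3$ produced in Corollary~\ref{c.v.2}. So the core claim to establish is that, for each fixed $u$,
\beq
\lim_{\kappa\to\infty}\Lambda_\kappa\int_{I_v^3}\Bigg|\overline{\iint}_{\hat s,u}^k\left\langle p_\kappa^{\vec y_s^u},p_\kappa^{\vec\rho(r)}\right\rangle_k\left\langle\partial_0^{-1}q_\kappa^{y_{0,\bar s}^u},q_\kappa^0\right\rangle e^{-\kappa^2|y_{\bar s}^u-\rho(r)|^2/8}\Bigg|\,|J_\rho|(r)\,dr=3\bigl|{\rm TDP}(\pi_0(\ol^u);R_v)\bigr|. \nonumber
\eeq

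First I would use Notation~\ref{n.r.2}: $R_v$ either lies in a tubular neighbourhood $\mathcal N(l^u)$ of $l^u=\pi_0(\ol^u)$ or is disjoint from it. In the disjoint case the exponential damping $e^{-\kappa^2|y^u_{\bar s}-\rho(r)|^2/8}$ forces the whole expression to $0$ (the argument behind Expression~\ref{e.vc.5}), consistent with ${\rm TDP}(\pi_0(\ol^u);R_v)=\emptyset$. In the tubular case, the self-linking problem noted after Lemma~\ref{l.v.1} forces us to pass through the framed displaced copy $\ol^{u,\epsilon}$; this is exactly why the middle line of the statement is written as the iterated limit $\lim_{\epsilon\to0}\lim_{\kappa\to\infty}$ of the regularized integrand, and that equality is taken as the definition (as in Expression~\ref{e.v.3}). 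I would then invoke Corollary~\ref{c.v.2} applied to $R_v$ in place of $R$: the regularized integral without the absolute value and without $\xi/3$ converges to $3\sum_{p\in{\rm TDP}(\pi_0(\ol^u);R_v)}\varepsilon(p)$. The remaining point is the absolute value. Here I would localize: partition $I_v^3$ according to which half-twist $p$ of $\pi_0(\ol^u)$ its image under $\tilde\pi_3\circ\rho$ is near (half-twists are isolated points in the interior of $\tilde\pi_3(R_v)$, so small disjoint neighbourhoods exist), plus a remainder piece contributing $0$ in the limit. On the neighbourhood of a single half-twist $p$, the analysis in Lemma~\ref{l.v.1} and Corollary~\ref{c.v.2} shows the integrand (before taking $|\cdot|$) converges to a quantity of constant sign $\varepsilon(p)=\pm1$ times a nonnegative bump, so $\bigl|\text{integral over that neighbourhood}\bigr|\to 3$, i.e. $3|\varepsilon(p)|=3$. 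Summing over the finitely many half-twists in the interior of $\tilde\pi_3(R_v)$ gives $3\bigl|{\rm TDP}(\pi_0(\ol^u);R_v)\bigr|$. Multiplying by $\xi_{\rho_u^\pm}/3$ gives the stated answer.

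The main obstacle is the interchange of the absolute value with the $\kappa\to\infty$ (and $\epsilon\to0$) limits: one cannot simply say $|\lim|=\lim|\cdot|$ at the level of the whole integral, because cancellation between half-twists of opposite sign would otherwise be lost — indeed the whole point of the lemma is that $|{\rm TDP}|$, not $\sum\varepsilon(p)$, appears. The resolution is the localization above: one must show (i) the contribution of $I_v^3$ away from the preimages of the half-twists vanishes, using the Gaussian concentration exactly as in the $K_2(\kappa)\to0$ step of Lemma~\ref{l.v.1}; and (ii) on each half-twist neighbourhood the pre-$|\cdot|$ integrand is, in the limit, a sign-definite multiple of an approximate identity, so that taking absolute values and then integrating commutes with the limit there. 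Establishing the uniform sign-definiteness on each such neighbourhood — which hinges on the fact that the algebraic crossing number is locally constant near an isolated half-twist, already recorded in Lemma~\ref{l.v.1} ("the algebraic crossing for each of the $p_k$ is the same") — is the delicate part; the rest is bookkeeping with the constants $\Lambda_\kappa$, $\tilde\kappa$, $\Delta_\kappa$ and the factors of $3$.
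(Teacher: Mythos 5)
Your proposal follows essentially the same route as the paper: split into the two cases of Notation \ref{n.r.2} (where $R_v$ disjoint from every tubular neighbourhood gives $0$ by Gaussian damping, and $R_v \subseteq \mathcal{N}(l^u)$ reduces to Corollary \ref{c.v.2} via the $\epsilon$-regularized definition), then pull out $\xi_{\rho_u^\pm}/3$. The paper's proof is just this two-case citation and does not address the passage from the signed sum $3\sum_p \varepsilon(p)$ of Corollary \ref{c.v.2} to the unsigned count $3\left|{\rm TDP}(\pi_0(\ol^u);R_v)\right|$; your localization around isolated half-twists supplies exactly the detail the paper leaves implicit, and is consistent with the absolute value sitting inside the $r$-integral.
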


\begin{proof}
If $\rho(I_v^3)$ is not inside a tubular neighborhood of some knot $l^u := \pi_0(\ol^u)$, then it is
straightforward to show that
\begin{align*}
\Lambda_\kappa &\int_{I_v^3} \Bigg|  \int_0^1 \int_0^1ds d\bar{s} \ \epsilon^{ijk}y_{i,s}^{u,\prime}y_{j,\bar{s}}^{u,\prime} \left\langle p_\kappa^{\vec{y}_{s}^u}, p_\kappa^{\vec{\rho}(r)} \right\rangle_k \left\langle \partial_0^{-1}q_\kappa^{y_{0,\bar{s}}^{u,\epsilon}}, q_\kappa^{0} \right\rangle e^{-\kappa^2|y_{\bar{s}}^{u,\epsilon} - \rho(r)|^2/8}  \ \frac{\xi_{\rho_{u}^\pm}}{3}\Bigg| \ |J_\rho|(r)\ dr \\
&\longrightarrow 0 ,
\end{align*}
as $\kappa$ goes to infinity.

If $R_v$ is contained inside the tubular neighborhood of some knot $l^u$, then Corollary \ref{c.v.2} says that
\begin{align*}
\Lambda_\kappa& \int_{I_v^3}\Bigg|ds d\bar{s}\  \int_0^1 \int_0^1 \epsilon^{ijk}y_{i,s}^{u,\prime}y_{j,\bar{s}}^{u,\prime}  \left\langle p_\kappa^{\vec{y}_{s}^u}, p_\kappa^{\vec{\rho}(r)} \right\rangle_k \left\langle \partial_0^{-1}q_\kappa^{y_{0,\bar{s}}^{u,\epsilon}}, q_\kappa^{0} \right\rangle e^{-\kappa^2|y_{\bar{s}}^{u,\epsilon} - \rho(r)|^2/8}  \ \frac{\xi_{\rho_{u}^\pm}}{3}\Bigg|\ |J_\rho|(r)\ dr \\
& \longrightarrow \left|{\rm TDP}(\pi_0(\ol^u); R_v)  \right|\xi_{\rho_u^\pm}.
\end{align*}

%From Notation \ref{n.r.2}, we assume that all the half-twists in $\l^u$ have the same algebraic sign. The proof is now complete by noting that \beq \sum_{u=1}^{\on}\left|\sum_{p \in {\rm TDP}(\pi_0(\ol^u); R)}\varepsilon(p)  \right|\xi_{\rho_u^\pm} = \sum_{v=1}^{\bar{m}}\sum_{u=1}^{\on}\left|\sum_{p \in {\rm TDP}(\pi_0(\ol^u); R_v)}\varepsilon(p)  \right|\xi_{\rho_u^\pm}. \nonumber \eeq
\end{proof}

We can state our main theorem.

\begin{thm}(Main Theorem)\label{t.main.3}\\
Consider two oriented hyperlinks, $\oL = \{\ol^u \}_{u=1}^{\on}$, $\uL = \{\ul^v \}_{v=1}^{\un}$ in $\bR \times \bR^3$ with non-intersecting (closed) loops, the former colored with a representation $\rho_u \equiv (\rho^+_u, \rho^-_u): \mathfrak{su}(2)\times \mathfrak{su}(2) \rightarrow {\rm End}(V_u^+) \times {\rm End}(V_u^-)$. These two oriented hyperlinks together, form a new colored oriented hyperlink, denoted by $\chi(\oL, \uL)$. Let $R \subset \bR^3 \cong \{0\} \times \bR^3$ be a compact region and disjoint from $\oL$.

Project $\ol^u$ in $\bR^3$ to form a knot, denoted by $\pi_0(\ol^u)$. Refer to Definition \ref{d.vx.1} for the definition of ${\rm TDP}(\pi_0(\ol^u); R)$, the set of half-twists from the knot $\pi_0(\ol^u)$ which are inside the interior of a planar set $\tilde{\pi}_3(R)$. And $\left|{\rm TDP}(\pi_0(\ol^u); R) \right|$ refers to the total number of half-twists inside the set.

Let $\hat{V}_R$ be the volume operator corresponding to $R$. Then $\hat{V}_R$ acts on the Wilson Loop observable for a colored hyperlink $\chi(\oL, \uL)$, $Z(q; \chi(\oL, \uL))$ via the path integral Expression \ref{e.v.11}, which is defined as the limit of Expression \ref{ex.v.3} as $\kappa$ goes to infinity, given by
\begin{align*}
\hat{V}_R[Z(q; \chi(\oL, \uL))] := \frac{q^2\pi^{3/2}}{2}\prod_{\bar{u}=1}^{\on}\Bigg\{& \left[ \sum_{u=1}^{\on} \left|{\rm TDP}(\pi_0(\ol^u); R) \right|
\xi_{\rho_u^+}   \right]^{1/\on}
\Tr_{\rho^+_{\bar{u}}}\ \exp[-\pi iq\ {\rm sk}(\ol^{\bar{u}}, \uL) \cdot \mathcal{E}^+] \\
+& \left[\sum_{u=1}^{\on} \left| {\rm TDP}(\pi_0(\ol^u); R)\right|
\xi_{\rho_u^-}   \right]^{1/\on}
\Tr_{\rho^-_{\bar{u}}}\ \exp[\pi iq\ {\rm sk}(\ol^{\bar{u}}, \uL) \cdot \mathcal{E}^-]
\ \ \Bigg\}.
\end{align*}
\end{thm}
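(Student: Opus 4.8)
The plan is to reduce the limit of Expression \ref{ex.v.3} to a product of factors, each of which is controlled by one of the two previously established limit computations: Lemma \ref{l.v.2} for the volume factors and the limit \eqref{e.w.5} for the holonomy/hyperlinking factors. The overall structure of Expression \ref{ex.v.3} is already a product over $\bar u$, and inside each factor there is a sum of two terms (the $+$ and $-$ pieces), each of which is a product of a bracket raised to the power $1/\on$ times a trace of $\hat{\mathcal{W}}^\pm_\kappa$. Since the limit $\kappa \to \infty$ commutes with finite products and finite sums, it suffices to compute the limit of each bracketed factor and each trace factor separately, and then reassemble.

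First I would split the region $R = \bigcup_{v=1}^{\bar m} R_v$ using the partition from Notation \ref{n.r.2}, so that each $R_v$ either lies in a tubular neighborhood $\mathcal{N}(l^u)$ of a single projected knot or is disjoint from all of them. For each $v$, the inner integral $\tilde\kappa \sum_u \int_{I_v^3} dr\, |J_\rho|(r)\,|\cdots|\,\xi_{\rho_u^\pm}$ is exactly the object handled by Lemma \ref{l.v.2}, once one matches the constant: $\tilde\kappa = \frac{\sqrt\pi}{2}\,\Lambda_\kappa$ with $\Lambda_\kappa = \frac{\kappa}{4\pi}\bigl(\frac{\kappa}{\sqrt{2\pi}}\bigr)^2\bigl(\frac{\kappa^2}{8\pi}\bigr)^2$, which is where the global prefactor $\frac{\pi^{3/2}}{2}$ (note $3 \cdot \frac{\sqrt\pi}{2} \cdot \frac{1}{3} = \frac{\sqrt\pi}{2}$, and there is an extra $\sqrt\pi$ absorbed — I would track this arithmetic carefully) ultimately comes from. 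By Lemma \ref{l.v.2}, the $v$-summand converges to $|{\rm TDP}(\pi_0(\ol^u); R_v)|\,\xi_{\rho_u^\pm}$ for the $u$ whose tubular neighborhood contains $R_v$, and to $0$ otherwise. Summing over $v$ and using additivity of ${\rm TDP}$ over the disjoint pieces (i.e. $\sum_v |{\rm TDP}(\pi_0(\ol^u); R_v)| = |{\rm TDP}(\pi_0(\ol^u); R)|$, since every half-twist inside $\tilde\pi_3(R)$ lies in exactly one $\tilde\pi_3(R_v)$), the bracketed sum over $v$ and $u$ converges to $\sum_{u=1}^{\on} |{\rm TDP}(\pi_0(\ol^u); R)|\,\xi_{\rho_u^\pm}$. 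Raising to the power $1/\on$ and using continuity of $t \mapsto t^{1/\on}$ on $[0,\infty)$ gives the bracket in the statement.

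Next, the trace factors $\Tr_{\rho_{\bar u}^\pm}\hat{\mathcal{W}}^\pm_\kappa(q; \ol^{\bar u}, \uL)$ are handled verbatim by the limit \eqref{e.w.5}, which gives $\Tr_{\rho^\pm_{\bar u}} \exp[\mp \pi i q\, {\rm sk}(\ol^{\bar u}, \uL)\cdot \mathcal{E}^\pm]$; note the sign pattern matches the theorem statement ($-\pi i q$ paired with $\rho^+$ and $+\pi i q$ paired with $\rho^-$). Then I would reassemble: the limit of the $\bar u$-th factor is $[\sum_u |{\rm TDP}(\pi_0(\ol^u);R)|\xi_{\rho_u^+}]^{1/\on}\Tr_{\rho^+_{\bar u}}\exp[-\pi iq\,{\rm sk}(\ol^{\bar u},\uL)\cdot\mathcal{E}^+] + [\sum_u |{\rm TDP}(\pi_0(\ol^u);R)|\xi_{\rho_u^-}]^{1/\on}\Tr_{\rho^-_{\bar u}}\exp[\pi iq\,{\rm sk}(\ol^{\bar u},\uL)\cdot\mathcal{E}^-]$, and taking the product over $\bar u = 1, \ldots, \on$ together with the leftover constant $q^2 \cdot \frac{\sqrt\pi}{2}$ (times whatever power of $\pi$ the constant bookkeeping produces, which I expect to total $\frac{q^2\pi^{3/2}}{2}$) yields exactly the claimed formula for $\hat V_R[Z(q;\chi(\oL,\uL))]$.

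The main obstacle, I expect, is twofold. The delicate analytic point is justifying that the $\kappa \to \infty$ limit may be taken termwise inside the products and the $1/\on$ powers simultaneously with the $\epsilon \to 0$ limit implicit in Lemma \ref{l.v.2}'s definition of Expression \ref{e.v.2}; this requires that the convergences in Lemma \ref{l.v.2} and in \eqref{e.w.5} are genuine (not merely in some weak sense) and uniform enough that products converge to products — here I would lean on the fact that all the limiting quantities are finite and that each $\hat{\mathcal{W}}^\pm_\kappa$ converges in operator norm so its trace converges. The second, more bookkeeping-heavy obstacle is verifying the constant: tracking the factor $3$ from the three planes $\Sigma_1,\Sigma_2,\Sigma_3$ (which cancels the $\frac{1}{3}$ inserted in Lemma \ref{l.v.2} and the $\sum_{k=1}^3$), the relation between $\tilde\kappa$, $\Lambda_\kappa$, and $\Delta_\kappa$, and the $\frac{\sqrt\pi}{2}$ and $q^2$ prefactors, so that they combine into $\frac{q^2\pi^{3/2}}{2}$. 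Everything else is a matter of substituting the already-proven limits and invoking continuity of multiplication and of $t\mapsto t^{1/\on}$.
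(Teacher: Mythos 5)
Your proposal is correct and follows essentially the same route as the paper, whose entire proof is the one-line citation of Equation (\ref{e.w.5}) for the trace factors and Lemma \ref{l.v.2}) for the bracketed volume factors; you simply spell out the reassembly and the constant bookkeeping (note the exact relation is $\tilde\kappa = \frac{\pi^{3/2}}{2}\Lambda_\kappa$, which directly yields the prefactor $\frac{q^2\pi^{3/2}}{2}$ you anticipated).
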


\begin{proof}
The proof follows from Equation (\ref{e.w.5}) and Lemma \ref{l.v.2}.
\end{proof}

\begin{rem}
Note that the volume functional $V_R$ commutes with the holonomy operator. As such, we expect and require that the quantized operator $\hat{V}_R$ be proportional to the identity.
\end{rem}

\section{Application to Quantum Field Theory}

There is some inconsistency in Quantum Field Theory, as explained by Thiemann in \cite{Thiemann:2002nj}. Because of the divergences one encounter in the computations in Quantum Field Theory,  Thiemann described it as an incomplete theory and called for a new theory to rectify this problem.

At short distances, in the order of Planck's distance (around $10^{-33}$ cm), the current Quantum Field Theory predicts the existence of virtual particles, which has large momentum $p$ and energy $E$. Its Compton length is inversely proportional to $p$ and its Schwarzschild radius is proportional to $E$. As its momentum $p$ increases and hence $E$ increases, we see that its Compton length and its Schwarzschild radius decreases and increases respectively. When its Compton length is equal to its Schwarzschild radius, General Relativity predicts that this particle will turn into a black hole. When this happens, Hawking radiation and all sorts of particles will henceforth be emitted.

At such short distance and high energy, quantum gravity should come into play. Now, we showed that in quantum gravity, Theorem \ref{t.main.3} says that volume is discretized, hence implying that length has to be discretized. Furthermore, it also implies that the energy of the particle has to be discretized. Therefore, at Planck's distance, Quantum Field Theory and General Relativity should no longer apply in this regime, so the above qualitative picture does not apply. In other words, before a particle's Compton length reaches Schwarzschild radius, classical General Relativity is no longer valid and Quantum Gravity takes over, preventing a quantum black hole from forming.

The fact that volume is discretized is the very essence of the discreteness of space-time in short distance and high energy, hence give birth to the idea of fundamental discreteness in quantum geometry. See \cite{0264-9381-21-15-R01}, \cite{rovelli2004quantum}, \cite{Rovelli1998} and \cite{Thiemann:2007zz}.

%%%%%%%%%%%%%%%%%%%%%%%%%%%%%%%%55

%\tableofcontents

\end{document}